\newtheorem{deff}{Definition}[section]
\newtheorem{twr}{Theorem}
\newtheorem{remark}[deff]{Remark}
\newcommand{\comment}[1]{}
\newcommand{\e}[0]{\mathbf{e}}
\newcommand{\PP}[0]{\mathbf{P}}
\newcommand{\E}[0]{\mathbb{E}}
\newcommand{\C}[0]{\mathbf{C}}
\newcommand{\es}[0]{\mathbf{s}}
\newcommand{\EE}{\mathbb E}
\begin{document}

\title[Generalized Gambler's ruin problem via Siegmund duality]{Generalized Gambler's ruin problem: explicit formulas via Siegmund duality
}
\thanks{Work supported by NCN Research Grant DEC-2013/10/E/ST1/00359}


\author{Pawe{\l} Lorek  } 
\address{Mathematical Institute, University of Wroc{\l}aw, Poland}
\email{Pawel.Lorek@math.uni.wroc.pl}




\date{\today}
\maketitle

\begin{abstract}
We give explicit formulas for ruin probabilities in  a multidimensional Generalized  Gambler's ruin problem. 
The generalization is best interpreted as a game of one player against $d$ other players,
allowing arbitrary  winning and losing probabilities (including ties) depending on the current fortune with particular player.
It includes many previous other generalizations as special cases. Instead of usually utilized first-step-like analysis we involve dualities between Markov chains.
We give general procedure for solving ruin-like problems utilizing Siegmund duality in Markov chains for partially ordered 
state spaces studied recently  in context of M\"obius monotonicity.
\medskip \par \noindent 
\textbf{Keywords} {Generalized Gambler's ruin problem;  Markov chains; absorption probability; Siegmund duality;  M\"obius monotonicity; partial ordering }
\medskip \par \noindent \par \noindent 
\textbf{Mathematics Subject Classification (2010)} {60J10; 60G40; 60J80}
\end{abstract}

\section{Introduction}
Gambler's ruin problem has been playing important role in applied mathematics.
There are applications in some casino games, e.g. craps \cite{Isaac1995}, blackjack \cite{Snell2009},
physics \cite{El-Shehawey2000}, \cite{Yamamoto2013}, hydrology  \cite{Tsai2014},
biology and epidemic models \cite{Harik1999}, finance \cite{Scott1981}, \cite{Rolski2009}, \cite{Asmussen2010},
just to mention few. There are many variations of the problem, newer ones are formulated after older ones are solved.
For example, the following variations were proposed: infinite amount of money, three or more players \cite{Kmet2002}, \cite{Rocha2004}, 
 the \textsl{attrition variation} (applies, e.g., to World Series or Stanley Cup finals \cite{Kaigh1979}),
 some cases of winning probabilities being dependent on current fortune \cite{El-Shehawey2009}, \cite{Lefebvre2008}.
\par 
The problem  is as relevant today as it was in 17th century. According to \cite{Edwards1983}
Pascal was the first who posed a problem in 1656 in a letter to Fermat. The most common form comes 
from Huygens who restated the problem as follows (rephrased):
 \begin{center}
  \begin{minipage}[t]{0.85\textwidth}
\textsl{``Let two men play with three dice, the first player scoring a point whenever 11 is thrown,
whereas the second whenever 14 is thrown. Each player starts with 12 points. Successful roll adds one point to the player and subtracts one from the other player.
The loser of the game is the first to reach zero points. What is the probability of victory for each player?
``}
\end{minipage}  
\end{center}
Huygens gave a solution of above problem. James Bernoulli \cite{Bernoulli1713} in 1713 
generalized and replaced Huygens' numerical results by formulas, i.e., he considered general initial capital $i$, general
total amount of money $N$ and general winning probabilities $p\in(0,1)$. Since then the problem became very popular
and different proofs were obtained. Pascal  gave his solution to Cercavi without mentioning the method.
According to Edwards \cite{Edwards1983} Fermat probably used the combinatorial argument (as in ''Problem of points``),
most methods which appeared later used some kind of first-step-like analysis which led to solving some recursions.

It is known that the absorption probability of given chain can be related to the stationary distribution of some other ergodic chain.
The relation is given via so-called \textsl{Siegmund duality}, the notion introduced in \cite{Siegmund1976}. 
It was studied in financial context, where the probability that a dual risk process starting at level $h$ is ruined equals the probability that the 
stationary queue length exceeds level $h$, see \cite{Asmussen2010}, \cite{Asmussen2009}.
Already in \cite{Lindley1952a} such duality between some random walks on integers was shown. For this duality reader is also 
referred to \cite{Theodore1983},  \cite{Diaconis1990a}, \cite{Dette1997a}  or \cite{Huillet2010}, just to mention few.
All above papers have one thing in common: they study Siegmund duality defined for linear ordering of the state space (and most of them
 birth and death chains only).
In this case  \cite{Siegmund1976} states that the process has such dual if and only if it is stochastically
monotone (w.r.t. total ordering). It is a little bit surprising that it was not exploited in the context of one-dimensional Gambler's ruin problem.
The solutions of the  classical problem and it's various one-dimensional generalizations are special cases of  Theorem \ref{twr:twr_main} and can be relatively easy calculated using usual stochastic monotonicity.
On the other hand  multidimensional case is quite different. For partial nonlinear ordering the stochastic monotonicity does not imply
the existence of Siegmund dual, see  \cite{Liggett2004}. Finding such duals was successful for some specific chains and/or orderings.
For example, in financial context, in \cite{Blaszczyszyn1999} authors considered $R^d$-valued Markov processes  (their Siegmund dual was set-valued).
Recently, \cite{Huillet2014b}  considered dualities  for  Markov chains on partitions and sets.
In \cite{Lorek2015_Siegmund_duality} we show that Siegmund dual exists if and only if chain is M\"obius monotone,
the connections with Strong Stationary Duality (consult \cite{Diaconis1990a}) is also given therein. 
Let us mention at this point that for non-linear ordering M\"obius and stochastic monotonicities are, in general, different.
In particular, we can have a chain which is not stochastically monotone, but which is M\"obius monotone, thus we are able to construct 
it's Siegmund dual.

In this paper, based on results from \cite{Lorek2015_Siegmund_duality}, we give the solution to the multidimensional
Generalized Gambler's ruin problem. The paper is organized as follows. 
In Section \ref{sec:main_result} we describe our Generalized Gambler's ruin problem, state it's solution (Theorem \ref{twr:twr_main})  and point out other results as  some special cases.
In Section \ref{sec:siegmund} we recall notion of Siegmund duality and antiduality for chains on partially ordered state spaces 
and give a general recipe for calculating ruin-like probabilities (summarized in Theorem \ref{th:main_antiduality}). Section 
\ref{sec:toy_example} includes some toy example (\textsl{Cat Eats Mouse Eats Cheese}), where the case of negative antidual matrix is presented. 
Finally, Section \ref{sec:proof_of_main} contains proof of Theorem \ref{twr:twr_main}.

\section{Generalized Gambler's ruin problem and main result}\label{sec:main_result}

In the one-dimensional Gambler's ruin problem two players start a game with total amount of, say, $N$ dollars and initial values   $k$ and $N-k$.
At each step they flip the   coin (not necessary unbiased) to decide who wins a dollar. 
The game is over when one of them goes bankrupt.\par 
We will consider the following generalization.  There is one player (referred as ``we'') playing with $d\geq 1$ other players.
Our initial assets are $(i_1,\ldots,i_d)$ and assets of consecutive players are $(N_1-i_1,\ldots,N_d-i_d)$
($N_j\geq 1$ is a total amount of assets with player $j$).
Then, with probability $p_j(i_j)$ we win one dollar with player $j$ and with probability $q_j(i_j)$ we lose it.
With the remaining probability $1-\sum_{k=1}^d(p_k(i_j)+q_j(i_k))$ we do nothing (i.e., ties are also possible).
Once we win completely with player $j$ (i.e., $i_j=N_j$) we do not play with him/her anymore.
We lose the whole game if we lose with at least one player, i.e., when $i_j=0$ for some $j=1,\ldots,d$.

\par 
For $d=1$ and  $p_1(j)=p, q_1(j)=q$ we have the classical Gambler's ruin problem. 
Many one-dimensional generalizations of this game were considered, e.g., \cite{Lefebvre2008} studied the case of some specific
sequences of $p_1(j), q_1(j)$, later this was extended to any  $p_1(j), q_1(j)$ in  \cite{El-Shehawey2009}. Variations of classical problem with 
ties allowed, i.e., $p_1(j)=p_1, q_1(j)=q_1, p_1+q_1<1$ were considered in, e.g., \cite{Lengyel2009}, \cite{Lengyel2009a} (the latter one considers so called \textsl{conditional}
version of the problem). Some of the articles studied both, the ruin probability and  duration of the game, whereas most papers studied only duration of the game. 
Some generalizations to higher a dimension $d\geq 1$ were studied in \cite{Rocha2004}, \cite{Kmet2002}.

 
We will describe the game more formally as a Markov chain $Z'$ with two absorbing states. 
 The state space is $\E'=\{(i_1,\ldots, i_d): 1\leq i_j\leq N_j, 1\leq j\leq d\}\cup\{-\infty\} $ (where $-\infty$ means we \textsl{lose}).
 For convenience denote $p_j(N_j)=q_j(N_j)=0=p_j(0)=q_j(0), j=1,\ldots,d$. Assume  that for all $i_j\in\{1,\ldots,N_j\}, j\in\{1,\ldots,d\}$ we have  
 $p_j(i_j)>0, q_j(i_j)>0 $ and  
 $  \sum_{k=1}^d (p_k(i_k)+q_k(i_k)) \leq 1. $
 With some abuse of notation, we will sometimes write $ (i'_1,\ldots,i'_d)=-\infty$. 
  The transitions of the described chain are following: \medskip \par 
  \noindent
  $\displaystyle \PP_{Z'}((i_1,\ldots,i_d),(i'_1,\ldots,i'_d)) =$
\begin{equation}\label{eq:PZp} 
 \left\{ 
 \begin{array}{llllllll}
  p_j(i_j) & \textrm{if} & i_j'=i_j+1, i_k'=i_k, k\neq j, \\[3pt]
  q_j(i_j) & \textrm{if} & i_j'=i_j-1, i_k'=i_k, k\neq j, \\[3pt]
  \sum_{j:i_j=1}q_j(1) & \textrm{if} & (i'_1,\ldots,i'_j)=-\infty, \\[3pt]
  1-\sum_{k=1}^d (p_k(i_k)+q_k(i_k))& \textrm{if} & i_j'=i_j, 1\leq j\leq d,\\[3pt]
  1&\textrm{if} & (i_1,\ldots,i_j)=(i'_1,\ldots,i'_j)=-\infty.    
 \end{array}
 \right.
\end{equation}
The chain, as required, has two absorbing states: $(N_1,\ldots,N_d)$ (we \textsl{win}) and $-\infty$ (we \textsl{lose}).
We will give formulas for the probabilities of winning starting at arbitrary state, i.e., for 
$$\rho((i_1,\ldots,i_d))=P(\tau_{(N_1,\ldots,N_d)} < \tau_{-\infty} | Z'_0=(i_1,\ldots,i_d)),$$
where $\tau_\e:=\inf\{n\geq 0: Z_n=\e\}$.
Our main result is  following

\begin{twr}\label{twr:twr_main}
Consider the  generalized Gambler's ruin problem described above. Then, the probability of winning starting at
$(i_1,\ldots,i_d)$ is given by
\begin{equation}\label{eq:gabmler_rho}
\rho((i_1,\ldots,i_d))=
{
\displaystyle \prod_{j=1}^d \left(  \sum_{n_j=1}^{i_j}  \prod_{r=1}^{n_j-1} \left({q_j(r)\over p_j(r)}\right) \right)
\over
\displaystyle \prod_{j=1}^d \left(  \sum_{n_j=1}^{N_j} \prod_{r=1}^{n_j-1} \left({q_j(r)\over p_j(r)}\right)
\right) }.
\end{equation}
\end{twr}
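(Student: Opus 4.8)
My plan is to identify $\rho$ as the unique bounded harmonic function of the chain $Z'$ with the prescribed boundary values and then verify that the claimed product formula satisfies it. Because the state space is finite, $q_j(i_j)>0$ throughout the interior, and from any transient state we can reach $-\infty$ by losing repeatedly, absorption in $\{(N_1,\ldots,N_d),-\infty\}$ is almost sure; hence $\rho$ is the unique function with $\rho(N_1,\ldots,N_d)=1$, $\rho(-\infty)=0$, and $\rho(\mathbf{i})=\sum_{\mathbf{i}'}\PP_{Z'}(\mathbf{i},\mathbf{i}')\rho(\mathbf{i}')$ at every other state. Writing $\mathbf{i}=(i_1,\ldots,i_d)$ and using the convention $p_j(0)=q_j(0)=p_j(N_j)=q_j(N_j)=0$, this harmonic equation rearranges to the driftless form
\begin{equation}
\sum_{j=1}^d\Big[p_j(i_j)\big(\rho(\mathbf{i}+\e_j)-\rho(\mathbf{i})\big)+q_j(i_j)\big(\rho(\mathbf{i}-\e_j)-\rho(\mathbf{i})\big)\Big]=0,
\end{equation}
where $\e_j$ is the $j$-th unit vector and $\rho$ is read as $0$ whenever a coordinate drops to $0$ (the state $-\infty$).

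Next I would substitute the product ansatz suggested by the right-hand side of \eqref{eq:gabmler_rho}, namely $\rho(\mathbf{i})=\prod_{k=1}^d a_k(i_k)$ with $a_k(m)=S_k(m)/S_k(N_k)$ and $S_k(m)=\sum_{n=1}^m\prod_{r=1}^{n-1}\big(q_k(r)/p_k(r)\big)$. The crucial point is that this ansatz decouples the equation: each difference $\rho(\mathbf{i}\pm\e_j)-\rho(\mathbf{i})$ carries the common factor $\prod_{k\neq j}a_k(i_k)$, so the $j$-th summand collapses to $\big(\prod_{k\neq j}a_k(i_k)\big)\big[p_j(i_j)(a_j(i_j+1)-a_j(i_j))+q_j(i_j)(a_j(i_j-1)-a_j(i_j))\big]$. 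It therefore suffices to show that each one-dimensional factor $a_j$ is harmonic for the classical birth--death recursion, i.e.\ that the bracket vanishes for every interior $m$.

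This one-dimensional identity is routine: with $\pi_n=\prod_{r=1}^{n-1}(q_j(r)/p_j(r))$ one has $a_j(m+1)-a_j(m)=\pi_{m+1}/S_j(N_j)$ and $a_j(m-1)-a_j(m)=-\pi_m/S_j(N_j)$, so the bracket equals $\big(p_j(m)\pi_{m+1}-q_j(m)\pi_m\big)/S_j(N_j)$, which is zero because $\pi_{m+1}=\pi_m\,q_j(m)/p_j(m)$ by definition. The boundary values are immediate: $a_j(N_j)=1$ gives $\rho(N_1,\ldots,N_d)=1$, while the empty sum $S_j(0)=0$ gives $a_j(0)=0$, so $\rho$ vanishes on $-\infty$ as required. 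By uniqueness this proves \eqref{eq:gabmler_rho}.

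I expect the only real content to be the decoupling observation; the remaining computations are mechanical. As a conceptual check (and an alternative route) I would note that the product form has a probabilistic explanation: coupling the chain so that each coordinate advances along its own position-dependent birth--death trajectory shows that the events ``coordinate $j$ reaches $N_j$ before $0$'' are independent across $j$, and we win precisely when all of them occur, which recovers $\rho=\prod_j a_j(i_j)$ directly. The paper's stated method is different again---via Siegmund duality---so the main obstacle along that route would instead be verifying the M\"obius monotonicity that guarantees the dual chain exists and then identifying $\rho$ with the dual's stationary distribution.
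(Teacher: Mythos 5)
Your proof is correct, but it takes a genuinely different route from the paper. You identify $\rho$ as the unique bounded harmonic function of $Z'$ with boundary values $1$ at $(N_1,\ldots,N_d)$ and $0$ at $-\infty$ (uniqueness being guaranteed by almost-sure absorption on the finite state space), and then verify by a direct, decoupled one-dimensional computation that the product ansatz satisfies the harmonic equation; this is essentially a clean first-step analysis with a clever guess, and all the steps check out, including the boundary conventions $a_j(0)=0$, $a_j(N_j)=1$ and the telescoping identity $p_j(m)\pi_{m+1}=q_j(m)\pi_m$. The paper instead deliberately avoids first-step analysis: it removes the coffin state, equips $\EE$ with the coordinatewise partial order, computes the Siegmund antidual $\PP_X=\C\PP_Z^T\C^{-1}$ explicitly via the M\"obius function of the product order, recognizes $\PP_X$ as the kernel of a closed network of $d$ independent birth--death servers with product-form invariant measure $\pi$, and reads off $\rho(\mathbf{i})=\pi(\{\mathbf{i}\}^{\downarrow})$, which telescopes to \eqref{eq:gabmler_rho}; Theorem \ref{th:main_antiduality} covers the case where $\PP_X$ has negative entries, so no monotonicity verification is actually needed in the antidual direction. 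Your argument is shorter and more elementary; what the paper's route buys is a reusable recipe (absorption probabilities as cumulative stationary probabilities of an antidual chain) and the structural explanation of \emph{why} the answer is a normalized product --- it is literally the distribution function of a product-form network. Your closing remark about independence of the events ``coordinate $j$ reaches $N_j$ before $0$'' via the embedded per-coordinate walks is a nice heuristic for the same structure, though as stated it would need a little care (each coordinate must be selected infinitely often before the game ends) to be a standalone proof.
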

\noindent Theorem \ref{twr:twr_main} generalizes some  previous one-dimensional cases. For example:
\begin{itemize}
 \item (i)   Assume we have won with  all the players except player $j$. Then, the probability of winning is
 $$\rho((N_1,\ldots N_{j-1},i_j,N_{j+1},\ldots ,N_d))=  
 {
\displaystyle \sum_{n=1}^{i_j} \prod_{r=1}^{n-1} \left({q_j(r)\over p_j(r)}\right)  
\over
\displaystyle \sum_{n=1}^{N_j} \prod_{r=1}^{n-1} \left({q_j(r)\over p_j(r)}\right)   }.$$
This way formula (4.1) from \cite{El-Shehawey2009} is recovered (with some slight modification in notation,
since  authors considered a various versions of reflecting barriers).
%

\item (ii) In addition to (i), let   $p_j(r)=p, q_j(r)=q, r=1,\ldots,N_j$. Then we recover winning probability in the the classical Gambler's ruin problem (with possible ties):
$$\rho((N_1,\ldots N_{j-1},i_j,N_{j+1},\ldots ,N_d))=
 {
\displaystyle \sum_{n=1}^{i_j}  \left({q \over p}\right)^{n-1}  
\over
\displaystyle \sum_{n=1}^{N_j}  \left({q\over p}\right)^{n-1}  }
=\left\{ 
\begin{array}{lllllll}
\displaystyle {1-\left({q \over p}\right)^{i_j} \over 1-\left({q \over p}\right)^{N_j} } & \mathrm{if} \ p \neq q, \\[20pt]
\displaystyle {i_j\over N_j} & \mathrm{otherwise}.
\end{array}
\right.
$$

\item (iii) \textsl{(Homogeneity case)}. Assume that for all $j=1,\ldots,d$  we have $p_j(r)=p_j, q_j(r)=q_j, r=1,\ldots,N_j$.
Define 
$$\mathcal{S}_j:=\left\{ 
\begin{array}{lll}
 1 & \mathrm{ if\ }   p_j=q_j,\\[8pt]
 0 & \mathrm{otherwise}.
\end{array}\right. 
$$
Then we have
$$
\rho((i_1,\ldots, i_d))
=\prod_{j=1}^d \left\{ \left({1-\left({q_j \over p_j}\right)^{i_j} \over 1-\left({q_j \over p_j}\right)^{N_j} }\right)\mathcal{S}_j
+{i_j\over N_j}(1-\mathcal{S}_j)\right\},$$
which is a multidimensional generalization of classical Gambler's ruin problem.
Of course we obtain the same probabilities if only ratios ${ q_j(i_j)\over p_j(i_j)}$ are constant, e.g., for the following 
spatially nonhomogeneous case 
$$ p_j(i_j) ={p_j\over 2\rho i_j +1}, \qquad q_j(i_j)={q_j\over 2\rho i_j+1},$$
which is thus a multidimensional generalization of cases considered in \cite{El-Shehawey2009}  and in \cite{Lefebvre2008}.
In the latter article only symmetric case corresponding to $p_j=q_j=1/2$   was considered.
\end{itemize}

\section{Tools: Siegmund duality and antiduality}\label{sec:siegmund}

We shortly  recall notion of Siegmund duality, it's applications to studying absorption probabilities and result
concerning existence of Siegmund dual from \cite{Lorek2015_Siegmund_duality}. Let $X$  be a discrete-time Markov chain with transition matrix $\PP_X$ and  finite state space 
$\EE=\{\e_1,\ldots,\e_M\}$ partially ordered by $\preceq$ with unique minimal element $\e_1$ and unique maximal element $\e_M$. Assume it is ergodic with the stationary distribution $\pi$.
For $A\subseteq \EE$ define $\PP_X(\e,A):=\sum_{\e'\in A}\PP_X(\e,\e')$ and similarly $\pi(A):=\sum_{\e\in A}\pi(\e)$. Define also $\{\e\}^\uparrow:=\{\e'\in\EE: \e\preceq \e'\}$,
$\{\e\}^\downarrow:=\{\e'\in\EE: \e'\preceq \e \}$ and $\delta(\e,\e')=\mathbf{1}(\e,\e')$.
We say that Markov chain $Z$ with
transition matrix $\PP_Z$ is the 
\textbf{Siegmund dual} of $X$ if 
\begin{equation}\label{eq:Siegmund_duality}
 \forall(\e_i,\e_j\in \EE)\ \forall(n\geq 0) \quad \PP^n_X(\e_i, \{\e_j\}^\downarrow) = \PP^n_Z(\e_j,\{\e_i\}^\uparrow).
\end{equation}
Note that we can find a matrix fulfilling (\ref{eq:Siegmund_duality}) which is  substochastic, since we may have for some $\e_j$ that  $\sum_{\e_i} \PP_Z(\e_j,\e_i)<1$.
In a similar way as Siegmund \cite{Siegmund1976} did (he considered only linear ordering), we  add then one extra absorbing state,
say $-\infty$ (called a \textsl{coffin state}). Denote the resulting matrix by $\PP_{Z'}$ and define $\PP_{Z'}(\e_j,-\infty )=1-\sum_{\e_i} \PP_{Z'}(\e_j,\e_i)$, $\PP_Z(-\infty, \e_j)=\delta(-\infty,\e_j)$
and $\PP_{Z'}(\e,\e_2)=\PP_Z(\e,\e_2)$ otherwise.
Note  that  (\ref{eq:Siegmund_duality}) implies that $\e_M$ is an absorbing state, thus $Z'$ has two absorbing states.
Taking limits as $n\to\infty$ on both sides of (\ref{eq:Siegmund_duality}) we have 
\begin{equation}\label{eq:siegm_pi}
\pi(\{\e_j\}^\downarrow) = \lim_{n\to\infty} \PP^n_{Z'}(\e_j,\{\e_i\}^\uparrow) = P(\tau_{\e_M}<\tau_{\e_{-\infty}} |  Z'_0=\e_j),
\end{equation}
where $\tau_\e=\inf\{n: Z'_n=\e\}$.
This way the stationary distribution of ergodic chain is related to the absorption  of its Siegmund dual.
\par 
For partial ordering $\preceq$ define $\C(\e,\e') = \mathbf{1}(\e\preceq\e')$. Such matrix is always invertible, and its inverse $\C^{-1}$ is often 
denoted by $\mu$ (what we use throughout the paper) and called \textsl{the M\"obius function of ordering $\preceq$}.
Note that (\ref{eq:Siegmund_duality}) for $n=1$ can be written as  

 \begin{equation}\label{eq:Siegmund_duality2}
 \PP_X\C=\C\PP_Z^T.
\end{equation}
The main result of \cite{Lorek2015_Siegmund_duality} is that for given partial ordering $\preceq$ the Siegmund dual chain 
exists if and only if  $X$ is M\"obius monotone (see  also \cite{Lorek2012d} for more details on this monotonicity).
In such a case, the Siegmund dual on $\EE'=\EE\cup\{-\infty\}$ has transitions outside coffin state given by
 \begin{equation}\label{eq:Siegmund_dual2}
 \PP_{Z'}=(\C^{-1}\PP_X\C)^T
\end{equation}
(the nonnegativity of which  is the definition of M\"obius monotonicity of $X$).
The natural application is in studying stationary distribution of a chain $X$ (e.g., its asymptotics): calculate Siegmund dual 
and then its probability of being eventually absorbed in $\e_M$.
However, we can reverse the process starting with a chain $Z'$ with two absorbing states (we \textsl{win} or we \textsl{lose}).
Assume its state space is $\EE'=\{-\infty\}\cup \{\e_1,\ldots,\e_M\}$ with absorbing states $-\infty$ and $\e_M$. Denote $\EE:=\{\e_1,\ldots,\e_M\}$.
\smallskip
\par
\noindent The procedure is then the following: 
\begin{itemize}
 \item[1)]  remove state $-\infty$ obtaining substochastic matrix $\PP_Z$;
 \item[2)] introduce some partial ordering $\preceq$ expressed by matrix $\C$ such that $\e_M$ is a unique maximal element;
 \item[3)] calculate transitions of \textbf{Siegmund antidual}  chain $X$ from  
 (\ref{eq:Siegmund_duality2}) calculating $\PP_X=\C\PP_Z^T \C^{-1}$;
 \item[4)] if the resulting matrix $\PP_X$ has a stationary measure $\pi$ such that $\forall(\e\in\EE)$ $ \lim_{n\to\infty}\PP_X^n(\e,\cdot)=\pi(\cdot)$ 
 then  we can calculate absorption probabilities of $Z'$ from  the relation (\ref{eq:siegm_pi})
 (if $\PP_X$ is a stochastic matrix, then $\pi$ is the stationary distribution of the chain related to this matrix).
\end{itemize}

%
\noindent
The details are in the following theorem.
\begin{twr}\label{th:main_antiduality}
Let $Z'$ be a Markov chain with transition matrix $\PP_{Z'}$ on $\E'=\{-\infty\}\cup\{e_1,\ldots,\e_M\}=:\{-\infty\}\cup\EE$ with two absorbing states $-\infty$ and $\e_M$.
Consider substochastic kernel $\PP_Z$ which is the matrix $\PP_{Z'}$ with row and column corresponding to $-\infty$ removed.
Fix some partial ordering $\preceq$ on $\EE$ expressed by matrix $\C$  such that $\e_M$ is a unique maximal state.
Calculate 
 \begin{equation}\label{eq:Siegmund_antiduality}
\PP_X=\C\PP_Z^T \C^{-1}.
\end{equation}
The resulting $\PP_X$ has the property that $\forall(\e\in\EE)$ $\sum_{\e_2\in\EE}\PP_X(\e,\e_2)=1$. 
Assume that there exists invariant measure $\pi$ fulfilling:
$$ \forall(\e_2\in\EE) \lim_{n\to\infty}\PP_X^n(\e_2,\e)=\pi(\e), \qquad \sum_{\e\in\EE}\pi(\e)=1.$$
Then we have $\rho(\e'):=P(\tau_{\e_M}<\tau_{-\infty } | Z'_0=\e')=\pi(\{\e'\}^\downarrow)$ (i.e., relation  (\ref{eq:siegm_pi}) holds).
%
\end{twr}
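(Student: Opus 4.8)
The plan is to reduce everything to the one-step intertwining $\PP_X\C=\C\PP_Z^T$ that is built into the definition $\PP_X=\C\PP_Z^T\C^{-1}$, to propagate it to all powers $n$, and then to let $n\to\infty$. Before that I would verify the row-sum claim. Writing $\mathbf 1$ for the all-ones column vector and $\mathbf 1_{\e_M}$ for the basis vector supported on $\e_M$ (not to be confused with the indicator notation $\mathbf 1(\cdot)$), the hypothesis that $\e_M$ is the greatest element means the $\e_M$-th column of $\C$ equals $\mathbf 1$, since $\C(\e,\e_M)=\mathbf 1(\e\preceq\e_M)=1$ for every $\e$; that is, $\C\,\mathbf 1_{\e_M}=\mathbf 1$, hence $\C^{-1}\mathbf 1=\mathbf 1_{\e_M}$. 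Since $\e_M$ is absorbing for $Z'$ and $\e_M\neq-\infty$, deleting the coffin row and column leaves the $\e_M$-row of $\PP_Z$ equal to $\mathbf 1_{\e_M}^{T}$, so $\PP_Z^{T}\mathbf 1_{\e_M}=\mathbf 1_{\e_M}$. Therefore $\PP_X\mathbf 1=\C\PP_Z^{T}\C^{-1}\mathbf 1=\C\PP_Z^{T}\mathbf 1_{\e_M}=\C\,\mathbf 1_{\e_M}=\mathbf 1$, which is exactly the asserted property $\sum_{\e_2\in\EE}\PP_X(\e,\e_2)=1$.

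Next I would promote the intertwining to all powers. From $\PP_X\C=\C\PP_Z^{T}$ a straightforward induction gives $\PP_X^{\,n}\C=\C\,(\PP_Z^{T})^{n}=\C\,(\PP_Z^{\,n})^{T}$ for every $n\geq 0$. Reading off the $(\e_i,\e_j)$ entry and using $\C(\e,\e')=\mathbf 1(\e\preceq\e')$ turns each side into a sum over a down-set or an up-set: the left becomes $\sum_{\e\preceq\e_j}\PP_X^{\,n}(\e_i,\e)=\PP_X^{\,n}(\e_i,\{\e_j\}^\downarrow)$ and the right becomes $\sum_{\e\succeq\e_i}\PP_Z^{\,n}(\e_j,\e)=\PP_Z^{\,n}(\e_j,\{\e_i\}^\uparrow)$. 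This is precisely the Siegmund duality (\ref{eq:Siegmund_duality}), now established for the antidual $\PP_X$ and for all $n$.

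The final step is to let $n\to\infty$ in the specialization $\e_i=\e_M$. Because $\e_M$ is the greatest element, $\{\e_M\}^\uparrow=\{\e_M\}$, so the right-hand side collapses to $\PP_Z^{\,n}(\e_j,\e_M)=P(Z'_n=\e_M\mid Z'_0=\e_j)$. As $\e_M$ is absorbing, this quantity is nondecreasing in $n$ and converges to $P(\tau_{\e_M}<\infty\mid Z'_0=\e_j)$; since hitting $\e_M$ precludes ever reaching $-\infty$, this equals $P(\tau_{\e_M}<\tau_{-\infty}\mid Z'_0=\e_j)=\rho(\e_j)$. On the left-hand side the convergence hypothesis $\PP_X^{\,n}(\e_M,\e)\to\pi(\e)$ gives $\PP_X^{\,n}(\e_M,\{\e_j\}^\downarrow)=\sum_{\e\preceq\e_j}\PP_X^{\,n}(\e_M,\e)\to\pi(\{\e_j\}^\downarrow)$, a finite sum of convergent terms. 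Equating the two limits yields $\rho(\e_j)=\pi(\{\e_j\}^\downarrow)$, which is the claim after renaming $\e_j=\e'$.

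The main obstacle I anticipate is the correct identification of the right-hand limit: a priori $\PP_Z^{\,n}(\e_j,\{\e_i\}^\uparrow)$ mixes the contributions of several (transient) states, and since $\PP_X$ may carry negative entries one cannot fall back on probabilistic intuition on the $X$-side. Choosing $\e_i=\e_M$ is what removes this difficulty: it isolates the single, monotone, genuinely probabilistic sequence $\PP_Z^{\,n}(\e_j,\e_M)$, whose limit is unambiguously the winning probability $\rho(\e_j)$, so that no certain-absorption or integrability assumption is needed for this particular entry and the possible non-positivity of $\pi$ plays no role in the argument.
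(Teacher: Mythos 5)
Your proof is correct and follows essentially the same route as the paper: verify the row-sum property, iterate the intertwining $\PP_X\C=\C\PP_Z^T$ to get the duality (\ref{eq:Siegmund_duality}) for all $n$, and pass to the limit with $\e_i=\e_M$ to obtain (\ref{eq:siegm_pi}). Your derivation of the row sums via $\C^{-1}\mathbf{1}=\mathbf{1}_{\e_M}$ is just a cleaner linear-algebra phrasing of the paper's Möbius-function identity $\sum_{\e_j}\C^{-1}(\e,\e_j)=\mathbf{1}(\e=\e_M)$, and you fill in the limit argument that the paper leaves as a sketch.
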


\begin{remark}\rm 
If resulting $\PP_X$ in  (\ref{eq:Siegmund_antiduality}) is a stochastic matrix of ergodic chain, say $X$, then 
$\pi$ is its stationary distribution. Moreover, it time reversal is M\"obius monotone with respect to $\preceq$.
\end{remark}
\begin{remark}\rm 
If the resulting $\PP_X$ has negative entries it does not have a real probabilistic interpretation.
However, e.q., in area of quantum mechanics, such \textsl{``distributions''}, called \textsl{negative quasi-probabilities} are quite common
and natural in this context. This notion was already introduced in \cite{Wigner1932}, where author writes:
\par 
 \begin{center}
  \begin{minipage}[t]{0.85\textwidth}
\textsl{``[...] cannot be really interpreted as the
simultaneous probability for coordinates and momenta, as is clear from the
fact, that it may take negative values. But of course this must not hinder the
use of it in calculations as an auxiliary function which obeys many relations
we would expect from such a probability.
``}
\end{minipage}  
 \end{center}
\par \noindent 
For some recent connections of negative quasi-probability and quantum computations see \cite{Veitch2012}.
\end{remark}
\medskip\par

\begin{proof}[Proof of Theorem \ref{th:main_antiduality}]
The main sketch of the proof was essentially given  before the theorem. 
The only thing which may be not clear  is that for all $\e$ we have $\sum_{\e'} \PP_X(\e,\e')=1$. 
Let us calculate
 $$\sum_{\e'} \PP_X(\e,\e')= \sum_{\e'} (\C\PP_Z^T \C^{-1})(\e,\e')= \sum_{\e'} \sum_{\e_2}  (\C\PP_Z^T)(\e,\e_2) \C^{-1}(\e_2,\e').$$
 We have 
 $$
 \begin{array}{rcl}
 \displaystyle (\C\PP_Z^T)(\e,\e_2) & = &  \displaystyle \sum_{\e_3} \C(\e,\e_3) \PP_Z^T(\e_3,\e_2)  =  \displaystyle  \sum_{\e_3} \C(\e,\e_3) \PP_Z(\e_2,\e_3) \\[16pt]
            &  = & \displaystyle   \sum_{\e_3\succeq \e}  \PP_Z(\e_2,\e_3)=\PP_Z(\e_2,\{\e\}^\uparrow), \\
 \end{array}
 $$
 thus
 $$
 \begin{array}{rcl}
 
  \displaystyle  \sum_{\e'} \PP_X(\e,\e') & = &  \displaystyle  \sum_{\e'} \sum_{\e_2} \PP_Z(\e_2,\{\e\}^\uparrow) \mu(\e_2,\e')
  =\sum_{\e'}  \mu(\e_2,\e') \sum_{\e_2} \PP_Z(\e_2,\{\e\}^\uparrow) \\[10pt]
   &  \stackrel{(*)}{=} &   \displaystyle \PP_Z(\e_M,\{\e\}^\uparrow)=1. \\
 \end{array}
 $$
%
%
%
%
%
%
  In $(*)$ we used the fact, that for any partial order with unique maximal element $\e_M$, the M\"obius function 
  fulfills $\forall(\e\in\E)\ \sum_{\e_j} \C^{-1} (\e,\e_j)=\mathbf{1}(\e=\e_M)$.  
To see this consider column of $\C^{-1}$ corresponding to state $\e_M$ after applying first elementary column operation of Gauss-Jordan elimination.
\end{proof}
\noindent
\section{Toy example: Cat Eats Mouse Eats Cheese}\label{sec:toy_example}

\noindent
Before proceeding to the proof of the main result on generalized Gambler's ruin problem (i.e., Theorem \ref{twr:twr_main}) we give a 5-state example.
The reason for this is that we wanted to present an example having the resulting matrix $\PP_X$ with  negative entries.
The example is taken from \cite{Bremaud1999} (Example 3.2 \textsl{Cat Eats Mouse Eats Cheese}, 
where the answer is easily calculated using first-step analysis):

\smallskip\par 
 \begin{center}
  \begin{minipage}[t]{0.85\textwidth}
\textsl{``A merry mouse moves in a maze. If it is at time $n$ in a room with $k$ adjacent rooms, it will be at time $n+1$ in one of the $k$ adjacent rooms,
choosing one at random, each with probability ${1\over k}$. A fat lazy cat remains all the time in a given room, and a piece of cheese waits for the mouse in another room 
(see Fig. \ref{fig:mouse}). The cat is not completely lazy: If the mouse enters the room inhabited by the cat, the cat will eat it.
What is the probability that the mouse ever gets to eat the cheese when starting from room 1, the cat and the cheese being in rooms 3 and 5, respectively?
``}
\end{minipage}  
\end{center}
 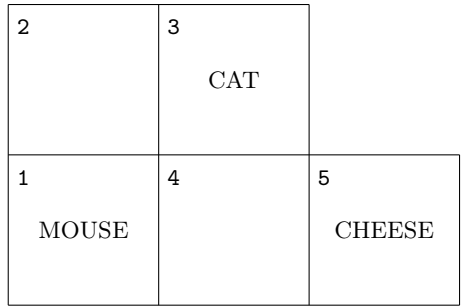
\begin{figure}[h!]
\centering
\begin{tikzpicture}
\draw (0,0) -- (0,4);
\draw (0,0) --(6,0);
\draw (2,0) -- (2,4);
\node at (1,1) {\small MOUSE};
\draw (4,0) -- (4,4);
\node at (5,1) {\small CHEESE};
\draw (6,0) -- (6,2);

\draw (0,2) --(6,2);
\draw (0,4) --(4,4);
\node at (3,3) {\small CAT};
\node at (2.2,3.7) {\small \texttt{3}};
\node at (0.2,3.7) {\small \texttt{2}};
\node at (0.2,1.7) {\small \texttt{1}};
\node at (2.2,1.7) {\small \texttt{4}};
\node at (4.2,1.7) {\small \texttt{5}};
\end{tikzpicture}
\caption{Maze, mouse, and murder} \label{fig:mouse}
\end{figure}
 
 \noindent
Essentially
we have $\EE'=\{1,2,3,4,5\}$ and 
$$  
 \PP_{Z'}=
 \left( 
 \begin{array}{llllllll}
 0 & {1\over 2} & 0 & {1\over 2} & 0 \\[2pt]
  {1\over 2} & 0 & {1\over 2} & 0 & 0  \\[2pt]
  0 & 0 & 1 & 0 & 0 \\[2pt]
  {1\over 3} & 0 & {1\over 3} & 0 & {1\over 3} \\[2pt]
  0 & 0 & 0 & 0 & 1 
 \end{array}
 \right), \ 
 \PP_Z=
 \left( 
 \begin{array}{llllllll}
 0 & {1\over 2} &  {1\over 2} & 0 \\[2pt]
  {1\over 2} & 0 &  0 & 0  \\[2pt]
  {1\over 3} & 0 &  0 & {1\over 3} \\[2pt]
  0 & 0 & 0 & 1 
 \end{array}
 \right), \ \C =
 \left( 
 \begin{array}{llllllll}
 1 & 1 & 1 & 1 \\[2pt]
 0 & 1 & 0 & 1 \\[2pt]
 0 & 0 & 1 & 1 \\[2pt]
 0 & 0 & 0 & 1 
 \end{array}
 \right),
$$
where $\PP_{Z'}$ is the original \textsl{Cat Eats Mouse Eats Cheese} matrix, $\PP_Z$ is the matrix with state 3 removed ($\E=\{1,2,4,5\}$) and $\C$ represents the ordering we introduced 
ordering with $5$ being a maximal state. We are to calculate 
$$\rho(j)=P(\tau_5<\tau_3 | Z_0'=j), j=1,2,4,5.$$
Calculating $\PP_X$ from (\ref{eq:Siegmund_antiduality}) and its stationary measure gives
$$\PP_X=\C\PP_Z^T\C^{-1}=
 \left( 
 \begin{array}{rrrrrrr}
 1 & -{1\over 2} &  -{1\over 3} & {5\over 6} \\[4pt]
 {1\over 2} & -{1\over 2} &  -{1\over 6} & {7\over 6}  \\[4pt]
 {1\over 2} & -{1\over 2} &  -{1\over 6} & {7\over 6}  \\[4pt]
 0 & 0 & {1\over 3} & {2\over 3} 
 \end{array}
 \right), \ (\pi(1),\pi(2),\pi(4),\pi(5))=\left({2\over 7}, -{1\over 7}, {1\over 7}, {5\over 7}\right). 
$$ 
From   (\ref{eq:siegm_pi}) we have $(\rho(1),\rho(2),\rho(4),\rho(5))=\left({2\over 7}, {1\over 7}, {1\over 7}, 1\right)$.
\section{Proof of Theorem \ref{twr:twr_main}}\label{sec:proof_of_main}
From matrix $\PP_{Z'}$ given in (\ref{eq:PZp}) we remove state $-\infty$ obtaining the following substochastic matrix
on $\EE=\{1,2,\ldots,N_1\} \times \cdots \times \{1,2,\ldots,N_d\}$
$$
 \PP_Z((i_1,\ldots,i_d),(i'_1,\ldots,i'_d)) = 
 \left\{ 
 \begin{array}{llllllll}
  p_j(i_j) & \textrm{if} & i_j'=i_j+1, i_k'=i_k, k\neq j, \\[5pt]
  q_j(i_j) & \textrm{if} & i_j'=i_j-1, i_k'=i_k, k\neq j, \\[5pt]
  1-\sum_{k=1}^d (p_k(i_k)+q_k(i_k))& \textrm{if} & i_j'=i_j, 1\leq j\leq d.\\      
 \end{array}
 \right.
 $$
Consider the coordinate-wise ordering: $ (i_1,\ldots,i_d) \preceq (i'_1,\ldots,i'_d) \ \textrm{ iff } \ i_j\leq i_j', j=1,\ldots,d.$
The state $\e_M:=(N_1,\ldots,N_d)$ is a unique maximal one.
Directly from Proposition 5 in   \cite{Rota64} we find the
corresponding M\"obius function
$$
\mu((i_1,\ldots,i_d), (i_1+r_1,\ldots,i_d+r_d))
=\left\{\begin{array}{ll}
  \displaystyle(-1)^{\sum_{k=1}^n r_k}  &  r_j\in\{0,1\}, \ i_j+r_j\leq N_j,\  j=1,\ldots,d \\[8pt]
0 & \textrm{otherwise}. \\
\end{array}\right.
$$
We have: $\PP_X((i_1,\ldots,i_d),(i'_1,\ldots,i'_d))  =  \C\PP_Z^T\C^{-1}((i_1,\ldots,i_d),(i'_1,\ldots,i'_d))$
$$
\begin{array}{lll} 
\displaystyle=\sum_{(i^{(2)}_1,\ldots,i^{(2)}_d)\preceq (i'_1,\ldots,i'_d)} \mu((i^{(2)}_1,\ldots,i^{(2)}_d),(i'_1,\ldots,i'_d))\PP_Z((i^{(2)}_1,\ldots,i^{(2)}_d),\{(i_1,\ldots,i_d)\}^\uparrow). 
\end{array}
$$
Let $\mathbf{s}_j=(0,\ldots,0,1,0,\ldots,0)$, where $1$ is on position $j$.
Consider the following cases: \smallskip\par 
\noindent $\bullet$ Case $ (i'_1,\ldots,i'_d)=(i_1,\ldots,i_d)-\es_j$. Then we have $\PP_X((i_1,\ldots,i_d),(i'_1,\ldots,i'_d))=$
\begin{eqnarray*}
 = & & \sum_{(i^{(2)}_1,\ldots,i^{(2)}_d)\preceq (i_1,\ldots,i_d)-\es_j} \mu((i^{(2)}_1,\ldots,i^{(2)}_d),(i_1,\ldots,i_d)-\es_j)\PP_Z((i^{(2)}_1,\ldots,i^{(2)}_d),\{(i_1,\ldots,i_d)\}^\uparrow)\\
 = & &\mu((i_1,\ldots,i_d)-\es_j,(i_1,\ldots,i_d)-\es_j) \PP_Z((i_1,\ldots,i_d)-\es_j,\{(i_1,\ldots,i_d)\}^\uparrow)\\[3pt]
  & + & \sum_{(i^{(2)}_1,\ldots,i^{(2)}_d)\preceq (i_1,\ldots,i_d)-\es_j \atop (i^{(2)}_1,\ldots,i^{(2)}_d)\neq (i_1,\ldots,i_d)-\es_j} \mu((i^{(2)}_1,\ldots,i^{(2)}_d),(i_1,\ldots,i_d)-\es_j)\PP_Z((i^{(2)}_1,\ldots,i^{(2)}_d),\{(i_1,\ldots,i_d)\}^\uparrow). \\
 = & &1\cdot \PP_Z((i_1,\ldots,i_d)-\es_j,(i_1,\ldots,i_d))+0=q_j(i_j-1).
\end{eqnarray*}
\noindent 
\noindent $\bullet$ Case $ (i'_1,\ldots,i'_d)=(i_1,\ldots,i_d)+\es_j$. We have $\PP_X((i_1,\ldots,i_d),(i'_1,\ldots,i'_d))=$
\begin{eqnarray*}
 = & & \sum_{(i^{(2)}_1,\ldots,i^{(2)}_d)\preceq (i_1,\ldots,i_d)+\es_j} \mu((i^{(2)}_1,\ldots,i^{(2)}_d),(i_1,\ldots,i_d)+\es_j)\PP_Z((i^{(2)}_1,\ldots,i^{(2)}_d),\{(i_1,\ldots,i_d)\}^\uparrow)\\
 = & &\mu((i_1,\ldots,i_d)+\es_j,(i_1,\ldots,i_d)+\es_j) \PP_Z((i_1,\ldots,i_d)+\es_j,\{(i_1,\ldots,i_d)\}^\uparrow)\\[3pt]
  & + &\mu((i_1,\ldots,i_d),(i_1,\ldots,i_d)+\es_j) \PP_Z((i_1,\ldots,i_d),\{(i_1,\ldots,i_d)\}^\uparrow)\\[3pt]
  & + & \sum_{r=1,\ldots,d\atop r\neq j} \mu((i_1,\ldots,i_d)-\es_r+\es_j,(i_1,\ldots,i_d)+\es_j)\PP_Z((i_1,\ldots,i_d)-\es_r+\es_j,\{(i_1,\ldots,i_d)\}^\uparrow) \\
  & + & \sum_{r=1,\ldots,d\atop r\neq j} \mu((i_1,\ldots,i_d)-\es_r,(i_1,\ldots,i_d)+\es_j)\PP_Z((i_1,\ldots,i_d)-\es_r,\{(i_1,\ldots,i_d)\}^\uparrow) \\
= & &(-1)^0 \cdot \PP_Z((i_1,\ldots,i_d)+\es_j,\{(i_1,\ldots,i_d)\}^\uparrow) + (-1)^1 \cdot  \PP_Z((i_1,\ldots,i_d),\{(i_1,\ldots,i_d)\}^\uparrow) \\[3pt]
& + & \sum_{r=1,\ldots,d\atop r\neq j} \left[(-1)^1\cdot \PP_Z((i_1,\ldots,i_d)-\es_r+\es_j,\{(i_1,\ldots,i_d)\}^\uparrow) \right.\\
&  & \quad \qquad + \left. (-1)^2\cdot \PP_Z((i_1,\ldots,i_d)-\es_r,\{(i_1,\ldots,i_d)\}^\uparrow)\right]\\
  = & & 1-\sum_{r=1,\ldots,d\atop r\neq j} q_r(i_r)-\left(1-\sum_{r=1} q_r(i_r)\right)+\sum_{r=1,\ldots,d\atop r\neq j}\left(-1+1\right)p_r(i_r)=q_j(i_j).
\end{eqnarray*}
\noindent $\bullet$ Case $ (i'_1,\ldots,i'_d)=(i_1,\ldots,i_d)$. We have $\PP_X((i_1,\ldots,i_d),(i_1,\ldots,i_d))=$
\begin{eqnarray*}
 = & & \sum_{(i^{(2)}_1,\ldots,i^{(2)}_d)\preceq (i_1,\ldots,i_d)} \mu((i^{(2)}_1,\ldots,i^{(2)}_d),(i_1,\ldots,i_d))\PP_Z((i^{(2)}_1,\ldots,i^{(2)}_d),\{(i_1,\ldots,i_d)\}^\uparrow)\\
 = & &\mu((i_1,\ldots,i_d),(i_1,\ldots,i_d)) \PP_Z((i_1,\ldots,,i_d),\{(i_1,\ldots,i_d)\}^\uparrow)\\[3pt]
  & + & \sum_{r=1,\ldots,d} \mu((i_1,\ldots,i_d)-\es_r,(i_1,\ldots,i_d))\PP_Z((i_1,\ldots,i_d),\{(i_1,\ldots,i_d)\}^\uparrow) \\
  = & &(-1)^0 \cdot \PP_Z((i_1,\ldots,i_d),\{(i_1,\ldots,i_d)\}^\uparrow) + \sum_{r=1,\ldots,d} (-1)^1\cdot\PP_Z((i_1,\ldots,i_d),\{(i_1,\ldots,i_d)\}^\uparrow) \\
  = & & 1-\sum_{r=1,\ldots,d } q_r(i_r)-\sum_{r=1,\ldots,d} p_r(i_r-1)=1-\sum_{r=1,\ldots,d } ( p_r(i_r-1)+q_r(i_r)).
\end{eqnarray*}
These were the only nonzero entries of $\PP_X$. Summarizing  we have
\medskip
\par\noindent 
$\PP_X((i_1,\ldots,i_d),(i'_1,\ldots,i'_d))=$
$$
 \left\{ 
 \begin{array}{llllllll}
  q_j(i_j) & \textrm{if} & i_j'=i_j+1, i_k'=i_k, k\neq j, \\[5pt]
  p_j(i_j-1) & \textrm{if} & i_j'=i_j-1, i_k'=i_k, k\neq j, \\[5pt]
  1-\sum_{k=1}^d (p_k(i_k-1)+q_k(i_k))& \textrm{if} & i_j'=i_j, 1\leq j\leq d.\\      
 \end{array}
 \right.
$$
Assume that for all $(i_1,\ldots,i_d)$ we have   $\PP_X((i_1,\ldots,i_d),(i_1,\ldots,i_d))\geq 0$.
Then, these  are the transitions of a closed network with $d$ independent servers: being at state $(i_1,\ldots,i_d)$ the  arrival to server $j$ is $q_j(i_j)$ and the departure is $p_j(i_j-1)$.
Its stationary distribution is following
\begin{equation}\label{eq:antid_pi}
\pi((i_1,\ldots,i_d))=
{
\displaystyle \prod_{j=1}^d \left( \prod_{r=1}^{i_j-1} \left({q_j(r)\over p_j(r)}\right) \right)
\over
\displaystyle \prod_{j=1}^d \left(\sum_{n_j=1}^{N_j} \prod_{r=1}^{n_j-1} \left({q_j(r)\over p_j(r)}\right) \right) }.
\end{equation}
However, if for some  entry of $\PP_X$ is negative, then of course the matrix does not 
represent any Markov chain, but with $\pi$ given in (\ref{eq:antid_pi})   we still have that $\pi\PP_X=\pi$ and $\lim_{n\to\infty}\PP_X^n((i_1,\ldots,i_d),\cdot)=\pi(\cdot)$ for 
every $(i_1,\ldots,i_d)\in\E$. \par 
Thus, because of Theorem \ref{th:main_antiduality} equality   (\ref{eq:siegm_pi}) holds in any case and we obtain  (\ref{eq:gabmler_rho}) what finishes the proof.

%
%

\section*{Acknowledgments}
Author thanks  Zbigniew Palmowski for helpful suggestions and discussions.
Partial support by the project RARE-318984, a Marie Curie IRSES Fellowship within the 7th European Community Framework Programme, is kindly acknowledged.






\end{document}